\documentclass[12pt, a4paper]{amsart}
\usepackage[margin=1.0in]{geometry}
\usepackage{breqn}
\usepackage{amsmath}

\usepackage[utf8]{inputenc}
\usepackage{amssymb,amscd,amsthm, verbatim,amsmath,color,fancyhdr, mathrsfs}
\usepackage{graphicx}

\usepackage{tikz, float, tikzscale}

\usepackage{mathtools}

\usepackage[hidelinks]{hyperref}

\definecolor{mycolour1}{RGB}{153,153,153}

\newcommand{\interval}[5]{
  \fill [mycolour1] ({-.01*(2*(#2-3)+1)*2\textheight},{-1*#3-.1}) rectangle ({-.01*(2*(#1-2))*2\textheight},{-1*#3+.1});
  \ifnum#4=1
  \node at ({-.01*(2*(#1-2))*2\textheight},{-1*#3+.3}) {$\langle #1$};
  \fi
  \draw ({-.01*(2*(#1-2))*2\textheight},{-1*#3-.15})--({-.01*(2*(#1-2))*2\textheight},{-1*#3+.15});
  \ifnum#5=1
  \node at ({-.01*(2*(#2-3)+1)*2\textheight},{-1*#3+.3}) {$#2\rangle$};
  \fi
  \draw ({-.01*(2*(#2-3)+1)*2\textheight},{-1*#3-.15})--({-.01*(2*(#2-3)+1)*2\textheight},{-1*#3+.15});
}

\newtheorem{thm}{Theorem}
\newtheorem*{thm*}{Theorem}

\newtheorem{cor}[thm]{Corollary}

\DeclareMathOperator{\diam}{diam}

\begin{document}
	
\title{Lower bounds for the Hausdorff dimension of expressible sets}
	
\author{Maiken Gravgaard}
\address{Maiken Gravgaard, Department of
  Mathematics, Aarhus University, Ny Munkegade 118, 8000 Aarhus C,
  Denmark.}
\email{maiken@math.au.dk}
\author{Jaroslav Han\v{c}l}
\address{Jaroslav Han\v{c}l, Department of Mathematics, University of
  Ostrava, 30. dubna 22, 701 03 Ostrava 1, Czech Republic.}
\email{hancl@osu.cz}
\author{Simon Kristensen}
\address{Simon Kristensen, Department of Mathematics, Aarhus
  University, Ny Munkegade 118, 8000 Aarhus C, Denmark.}
\email{sik@math.au.dk}

\begin{abstract}
  We obtain positive lower bounds on the Hausdorff dimension of sets
  of real numbers given by expressions of the form $\sum_{n=1}^\infty
  \frac{1}{a_n b_n}$, where $b_n$ satisfies some growth condition and
  $a_n$ lies in some set, possibly depending on $n$. As a consequence
  of our results, some of the irrational numbers arising from
  Erd\H{o}s' celebrated construction from 1976 are not Liouville
  numbers.
\end{abstract}

\maketitle

\section{Introduction}

In his seminal paper from 1975 \cite{zbMATH03578925}, Erd\H{o}s
famously showed that if $(a_n)_{n=1}^\infty$ is an increasing sequence
of integers with $a_n \ge n^{1+\varepsilon}$ for some $\varepsilon >
0$ and with $\limsup_{n \rightarrow \infty} a_n^{1/2^n} = \infty$,
then the real number
\[
\sum_{n=1}^{\infty} \frac{1}{a_n}
\]
is irrational. This was later strengthened by Han\v{c}l
\cite{zbMATH02103621}, who proved that the condition $\limsup_{n
  \rightarrow \infty} a_n^{1/2^n} = \infty$ may be replaced with the
condition that the sequence $(a_n^{1/2^n})_{n=1}^\infty$ is divergent
with lower limit $>1$.
	
It was seen, for instance in \cite{zbMATH06790024}, that this
condition cannot be weakened further in terms of growth. Indeed, if
$a_n$ is given by the recurrence $a_{n+1} = a_n^2 - a_n +1$, then
\[
\sum_{n=1}^{\infty} \frac{1}{a_n} = \frac{1}{a_1 - 1}.
\]
On the other hand, it is similarly easy to show that the sequence with
terms $a_n^{1/2^n}$ is convergent, and indeed it was shown by Aho and
Sloane \cite{zbMATH03435566} that for $a_1 = 2$, the limit is about
$1.264$, which is certainly greater than $1$.
	
At a first glance, it seems reasonable to attempt to prove results of
this kind by approximating the sum by its truncations. These will of
course be rationals, and as the denominators increase very rapidly
indeed, in the absense of cancellation, one would suspect them to
converge rapidly enough to provide a proof of the irrationality of the
sum. However, as the example above shows, cancellation does appear and
for some sequences in a fairly dramatic way at the limit of the
method.

It is of some interest to know whether the method -- when it works --
yields numbers which are not extremely well approximated by
rationals. By this, we mean numbers which are not Liouville
numbers. We recall that a Liouville number is a real number $\ell$
such that for any $w > 1$, there is a rational $p/q \in \mathbb{Q}$
for which
\[
0 < \left\vert \ell - \frac{p}{q} \right\vert < \frac{1}{q^w}.
\]
The set consists only of transcendental numbers, and is a set of
Hausdorff dimension (see below for a definition) zero.
	
The questions whether or not a sum of the above form is irrational
leads us to the notion of an expressible set. Given a sequence
$A = (a_n)_{n=1}^\infty$, we will say that the expressible set of the
sequence is
\[
\mathcal{K}(A)=  \left\{ x=\sum_{n=1} \frac{1}{d_na_n} : d_n \in
\mathbb{N}\right\}.
\]
An alternative way of stating Erd\H{o}s' result \cite{zbMATH03578925}
is that if $\limsup_{n \rightarrow \infty} a_n^{1/2^n} = \infty$, then
$\mathcal{K}(A)$ contains only irrational numbers. However in the same
paper, Erd\H{o}s proved that if the growth condition is strengthened
to the condition that $\limsup_{n \rightarrow \infty} a_n^{1/t^n} =
\infty$ for any $t \in \mathbb{N}$, then every element in
$\mathcal{K}(A)$ is a Liouville number. A consequence of Theorem
\ref{thm:double-exp2Nn} below is that this is sharp. If there is a $t
\in \mathbb{N}$ so that $\limsup_{n \rightarrow \infty} a_n^{1/t^n} <
\infty$, then $\mathcal{K}(A)$ contains non-Liouville numbers.
	
Another natural question is then what growth conditions of a sequence
ensure that the expressible set contains only transcendental
numbers. An answer to this was obtained by Han\v{c}l
\cite{zbMATH01154148} in more generality. Note that there is nothing
in our set-up requiring the sequence $(a_n)_{n=1}^\infty$ to consist
of integers. Han\v{c}l proved that if $a_n = p_n/q_n$ is rational with
the sequence of numerators $p_n \le 2^{(3 + \beta)^n}$ and
denominators $q_n \ge 2^{(3+\alpha)^n}$ for some choice of $\alpha >
\beta > 0$, then the expressible set $\mathcal{K}(A)$ consists
exclusively of transcendental numbers. We will give a growth
classification in terms of Mahler's and Koksma's classification of
transcendental numbers below in Corollary
\ref{cor:classification}. For sequences $A$ of algebraic numbers, see
Han\v{c}l and Nair \cite{zbMATH06790024}, Andersen and Kristensen
\cite{zbMATH07126822} and Laursen \cite{zbMATH07725151} for related
results. 
	
From the point of view of measure, it was shown by Han\v{c}l, Nair and
\v{S}ustek \cite{zbMATH05200846} that of $a_n = p_n/q_n$, $\limsup_{n
  \rightarrow \infty} q_n^{1/3^n} = \infty$, $q_n \ge
n^{1+\varepsilon}$ and $p_n \le 2^{\log_2^\alpha a_n}$ for some
$\alpha \in (0,1)$, then $\mathcal{K}(A)$ has Lebesgue measure
zero. Under the same conditions, but with the first replaced by
$\limsup_{n \rightarrow \infty} q_n^{1/(3+\delta)^n} = \infty$, it was
shown by Han\v{c}l, Nair, Novotn\'y and \v{S}ustek
\cite{zbMATH06090451} that the Hausdorff dimension of the set is at
most $2/(2+\delta)$.
	
Han\v{c}l and \v{S}ustek \cite{zbMATH06714945} proved that
$\mathcal{K}(A)$ has zero Lebesgue measure for the sequence with $a_n
= 2^{3^n}$. Furthermore, if $a_{n+1} \ge n 2^{2^{a_n}}$, Han\v{c}l and
\v{S}ustek \cite{zbMATH05227390} showed that $\mathcal{K}(A)$ has
Hausdorff dimension zero. This of course follows from Erd\H{o}s'
result \cite{zbMATH03578925} if we assume the $a_n$ to be integers,
since in this case we would see only Liouville numbers in
$\mathcal{K}(A)$, but they make no assumption on the integrality of
the sequence.
	
All the above results are concerned with sequences of doubly
exponential growth. As we will also be dealing with sequences of the
form $a_n = A^{n-1}$, we will briefly mention some results on the
expressible set of these sequences. In this case, $A$ can be assumed
to be any real number. It was shown by Han\v{c}l, Schintzel and
\v{S}ustek \cite{zbMATH05635538} that for $0 < A \le 1$,
$\mathcal{K}(A) = \mathbb{R}_+$. For $A > 1$, the series
$\sum_{n=1}^\infty \frac{1}{A^{n-1}} = \frac{1}{A-1}$, so there is no
hope for this to remain true, but for $1< A \le 3$, $\mathcal{K}(A) =
(0,\frac{1}{A-1}]$, the maximal possible. However, for $3 < A$, they
  were only able to prove that $\mathcal{K}(A) \supseteq (0,
  \frac{1}{(A-1)(\lceil A \rceil - 2)}]$.
    
 In the present paper, we will be considering expressible sets, but
 with additional restrictions. Concretely, we will consider various
 growth restrictions on the sequence $A = (a_n)_{n=1}^\infty$, but
 also restrictions on the `digits', $d_n$, which will be required to
 lie in some set $\mathbb{D}_n$, possibly depending on $n$. We will
 assume them to be natural numbers, but unless otherwise stated, we
 can assume the $a_n$ to be real numbers.
	
For a sequence $A = (a_n)_{n=1}^\infty$ and a sequence of sets of
natural numbers, $\left(\mathbb{D}_n\right)_{n=1}^{\infty}$, we define
\begin{equation}
  \label{eq:expressible}
  \mathcal{K}(A, (\mathbb{D}_n)_{n=1}^\infty) = \left\{ x=\sum_{n=1}
  \frac{1}{d_na_n} : d_n \in C_n \right\}.
\end{equation}
Putting restrictions on the `digits' seems natural, but the only
previous results we are aware of are due to Han\v{c}l and \v{S}ustek
\cite{zbMATH05844058}, who considered expressible sets of integer
sequences, where the digits were assumed to be bounded. Criteria
implying Lebesgue measure zero as well as upper bounds on the
Hausdorff dimension of the expressible set with these restrictions
were derived. In all cases, the sequence $A$ was assumed to consist of
integers.
 
\section{Background}
	
We will be concerned with the Hausdorff dimension of expressible
sets. Hausdorff dimension is defined in terms of Hausdorff
measures. We briefly define these concepts. For a set $E \subseteq
\mathbb{R}^n$ and real numbers $s \ge 0$ and $\delta > 0$, let
\[
\mathcal{H}_\delta^s(E) = \inf\left\{ \sum_{U \in
  \mathcal{C}_\delta} \diam(U)^s : \mathcal{C}_\delta \text{
  is a cover of $E$ with sets of diameter $\le
  \delta$}\right\}.
\]
As $\delta$ decreases, there are fewer covers at our disposal, so the
infimum can only increase. Thus, allowing for the limit to be
$\infty$, we can define
\[
\mathcal{H}^s (E) = \lim_{\delta \rightarrow 0}
\mathcal{H}_\delta^s(E),
\]
the Hausdorff $s$-measure of $E$. This construction yields an outer
measure on $\mathbb{R}^n$, for which the Borel sets are measurable and
which is even Borel regular. For a fixed set $E$, it is a decreasing
function of $s$, and can take a positive and finite value for at most
one value of $s$. We thus define the Hausdorff dimension of $E$ to be
\[
\dim_{\mathcal{H}}(E) = \inf\{s \ge 0: \mathcal{H}^s(E)=0\}
\]
The notion is well-defined due to the remarks preceding its
definition. It has most of the properties one would expect from a
dimension. We refer to Falconer's book \cite{zbMATH01994007} for
details.
	
A plethora of examples of sets in $\mathbb{R}$ for which the Hausdorff
dimension may be estimated are (generalised) Cantor sets. One
constructs such a set by starting with the closed unit interval at
level $0$. This is split into $m_1$ closed intervals, which are kept
and where the distance between them is at most $\varepsilon_1$. The
procedure is repeated, so that each of the level $1$ intervals is
split into $m_2$ closed intervals, and so that the gaps between the
remaining intervals is at least $\varepsilon_2$. This procedure is
repeated \emph{ad infinitum}, and the resulting set denoted by
$\mathcal{C}$, see Figure \ref{fig: dim}.
	
\begin{figure}[h] 
  \centering
  \scriptsize
  \begin{tikzpicture} 
    \useasboundingbox (0,-0.45) rectangle (15,0.45);
    \fill [mycolour1] (0,-0.1) rectangle (15,0.1);
    \draw (0,0)--(15,0);
    \node at (0,.3) {Level 0:};
    \draw (0,-0.15)--+(0,0.3); 
    \draw (15,-0.15)--+(0,0.3);
    \draw (15,-0.15)--+(0,0.3);
  \end{tikzpicture}
  \begin{tikzpicture} 
    \useasboundingbox (0,-0.45) rectangle (15,0.45);
    \fill [mycolour1] (0,-0.1) rectangle (3,0.1);
    \fill [mycolour1] (5,-0.1) rectangle (7,0.1);
    \fill [mycolour1] (11,-0.1) rectangle (15,0.1);
    \node at (0,.3) {Level 1:};
    \node at (9,.15) {$\dots$};
    \draw (0,0)--(15,0);
    \draw (15,-0.15)--+(0,0.3);
    \draw (0,-0.15)--+(0,0.3);
    \draw (3,-0.15)--+(0,0.3);
    \draw (5,-0.15)--+(0,0.3);
    \draw (7,-0.15)--+(0,0.3);
    \draw (11,-0.15)--+(0,0.3);
    \draw (0,-0.15)--+(0,0.3); 
  \end{tikzpicture}
  \begin{tikzpicture} 
    \useasboundingbox (0,-0.45) rectangle (15,0.45);
    \fill [mycolour1] (0,-0.1) rectangle (0.5,0.1);
    \fill [mycolour1] (2,-0.1) rectangle (3,0.1);
    \fill [mycolour1] (5,-0.1) rectangle (5.33,0.1);
    \fill [mycolour1] (6.33,-0.1) rectangle (7,0.1);
    \fill [mycolour1] (11,-0.1) rectangle (11.66,0.1);
    \fill [mycolour1] (13.66,-0.1) rectangle (15,0.1);
    \node at (0,.3) {Level 2:};
    \node at (1.25,.15) {$\dots$};
    \node at (5.8,.15) {$\dots$};
    \node at (12.66,.15) {$\dots$};
    \draw (0,0)--(15,0);
    \draw (15,-0.15)--+(0,0.3);
    \draw (0,-0.15)--+(0,0.3);
    \draw (0.5,-0.15)--+(0,0.3);
    \draw (2,-0.15)--+(0,0.3);
    \draw (3,-0.15)--+(0,0.3);
    \draw (5,-0.15)--+(0,0.3);
    \draw (5.33,-0.15)--+(0,0.3);
    \draw (6.33,-0.15)--+(0,0.3);
    \draw (7,-0.15)--+(0,0.3);
    \draw (11.,-0.15)--+(0,0.3);
    \draw (11.66,-0.15)--+(0,0.3);
    \draw (13.66,-0.15)--+(0,0.3);
    \draw (0,-0.15)--+(0,0.3); 
  \end{tikzpicture}
  \begin{tikzpicture} 
    \useasboundingbox (0,-0.45) rectangle (15,0.45);
    \node at (7.5, 0.45) {$\vdots$};
  \end{tikzpicture}
  \caption{Generalised Cantor set with $m_n$ sub-intervals in layer
    $n$ and gaps of size $\geq \varepsilon_n$}
  \label{fig: dim}
\end{figure}
	
The following is stated as Example 4.6 in \cite{zbMATH01994007}. We
state it here as a theorem.
	
\begin{thm}
  \label{BF}
  Let $\mathcal{C}$ be a generalised Cantor set (see Figure \ref{fig:
    dim}). Assume that there exists a positive integer $n_0$ such
  that, for any $n\geq n_0$, each interval in level $n$ splits into
  $m_n$ subintervals and each gap in level $n$ is $\geq
  \varepsilon_n$. Then
  \begin{align*}
    \dim_{\mathcal{H}}\left(\mathcal{C}\right) \geq
    \limsup_{n\to\infty}\frac{\log(m_1\dots
      m_n)}{-\log(m_{n+1}\varepsilon_{n+1})}
  \end{align*}
\end{thm}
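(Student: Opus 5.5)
The plan is the mass distribution principle (Falconer, Principle 4.2): build a probability measure $\mu$ on $\mathcal{C}$ and show that $\mu(U)\le C\,|U|^s$ for every $s$ below the right-hand side and every set $U$ of small diameter. I have not closed this for the stated $\limsup$; the single scale estimate below yields only the $\liminf$ bound, and the step where the $\limsup$ must enter is the one I flag as the obstacle.

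First I reduce to $n_0=1$. Changing finitely many $m_n$ alters $\log(m_1\cdots m_n)$ by an additive constant, which does not affect the $\limsup$. Moreover $\mathcal{C}$ is a finite union of similar copies of the sets built from level $n_0$ on.

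Next I define $\mu$ by splitting mass evenly. Writing $P_n=m_1\cdots m_n$, each level-$n$ interval receives mass $1/P_n$.

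The basic estimate is as follows. Take $U$ with $\varepsilon_{n+1}\le |U|<\varepsilon_n$. Since level-$n$ gaps are at least $\varepsilon_n$, $U$ meets at most one level-$n$ interval. Inside it, $U$ meets at most $\min\{m_{n+1},\,2|U|/\varepsilon_{n+1}\}$ level-$(n+1)$ intervals. Hence
\[
\mu(U)\le \min\Big\{\frac1{P_n},\ \frac{2|U|}{\varepsilon_{n+1}P_{n+1}}\Big\}
\le \Big(\frac1{P_n}\Big)^{1-s}\Big(\frac{2|U|}{\varepsilon_{n+1}P_{n+1}}\Big)^{s}
=\frac{2^s|U|^s}{P_n\,(m_{n+1}\varepsilon_{n+1})^{s}} .
\]
This is at most $2^s|U|^s$ as soon as $P_n\ge (m_{n+1}\varepsilon_{n+1})^{-s}$, that is, when $s\le \log P_n/(-\log(m_{n+1}\varepsilon_{n+1}))$. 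Requiring this at every scale $n$ gives only the $\liminf$ bound.

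To reach the $\limsup$, fix $s$ below it and choose a subsequence $n_k$ along which $\log P_{n_k}\ge s\,(-\log(m_{n_k+1}\varepsilon_{n_k+1}))$. The aim is to show that the scales $n$ strictly between consecutive $n_k$ do no harm. Two tools are available here.

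The first is the geometric constraint that $m_{n+1}$ subintervals separated by gaps $\ge\varepsilon_{n+1}$ fit inside one level-$n$ interval. This gives $m_{n+1}\varepsilon_{n+1}\le \delta_n$, where $\delta_n$ is the length of a level-$n$ interval, and in particular $m_{n+1}\varepsilon_{n+1}\le \varepsilon_{n}$-type monotonicity of the scale sequence.

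The second is the freedom to replace $\mu$ by a measure adapted to the subsequence. Concretely, I would pass to the subset Cantor set $\mathcal{C}'\subseteq\mathcal{C}$ that keeps, inside each level-$n_k$ interval, only a sparse well-separated selection of descendants at level $n_{k+1}$. This is arranged so that the merged construction (levels $n_k$ only) satisfies the hypothesis of the single scale estimate with exponent $s$ at every merged level. Monotonicity of Hausdorff dimension then gives $\dim_{\mathcal H}\mathcal{C}\ge\dim_{\mathcal H}\mathcal{C}'\ge s$.

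The main obstacle is exactly this middle-scale control. I must verify that the selection can be made while keeping both the merged counts $m'_k$ and gaps $\varepsilon'_k$ good enough, so that $\log(m'_1\cdots m'_k)/(-\log(m'_{k+1}\varepsilon'_{k+1}))$ stays at least $s$ for all $k$ rather than just along a subsequence.

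I would first try choosing $n_{k+1}$ extremely sparse relative to $n_k$. The contribution of $P_{n_k}$ then becomes negligible, and the intermediate estimate reduces to the inequality $m_{n+1}\varepsilon_{n+1}\le\delta_n$. If that reduction fails, this step is where the $\limsup$ formulation would need an additional regularity hypothesis on $(m_n,\varepsilon_n)$ beyond what is stated.
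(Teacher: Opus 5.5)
The gap is exactly the step you flag, and it cannot be closed. With $\limsup$ the statement is false, so no choice of measure or sub-Cantor set can make the subsequence step work.

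Here is a counterexample. Take $m_n=2$ for every $n$. Replace each level-$(n-1)$ interval, of length $\delta_{n-1}$, by its two end subintervals of length $\delta_n$. Set $\delta_n=\delta_{n-1}/3$, except on a sparse set of levels $n_1<n_2<\cdots$ (say $n_{k+1}=n_k^3$), where $\delta_{n_k}=\delta_{n_k-1}e^{-n_k^2}$.

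Every level-$n$ gap is at least $\varepsilon_n:=\delta_{n-1}/3$, so the ratio in the theorem equals $n\log 2/(-\log\delta_n+\log\frac32)$. For $n=n_{k+1}-1$ we have $-\log\delta_n=n\log 3+O(n^{2/3})$, so the $\limsup$ of the ratio is at least $\log 2/\log 3$.

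On the other hand, cover $\mathcal{C}$ by the $2^{n_k}$ intervals of level $n_k$. This gives $\mathcal{H}^t(\mathcal{C})\le\liminf_k 2^{n_k}\delta_{n_k}^t\le\liminf_k 2^{n_k}e^{-tn_k^2}=0$ for every $t>0$, so $\dim_{\mathcal{H}}(\mathcal{C})=0$.

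The same covering argument shows why your thinning strategy is doomed. Let $\delta_n$ be the maximal length of a level-$n$ interval. For any $\mathcal{C}'\subseteq\mathcal{C}$,
$\dim_{\mathcal{H}}(\mathcal{C}')\le\dim_{\mathcal{H}}(\mathcal{C})\le\liminf_n\log(m_1\cdots m_n)/(-\log\delta_n)$.
A $\limsup$ hypothesis gives no control over this $\liminf$, and passing to a subset can only lower the dimension.

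What you did establish is the correct theorem. The paper gives no proof of its own; it quotes Falconer's Example 4.6. Falconer's argument is precisely your single-scale mass distribution estimate (even splitting of mass, and $\min\{a,b\}\le a^{1-s}b^s$). His conclusion is
$\dim_{\mathcal{H}}(\mathcal{C})\ge\liminf_{n}\log(m_1\cdots m_n)/(-\log(m_{n+1}\varepsilon_{n+1}))$,
under the extra hypothesis $\varepsilon_{n+1}<\varepsilon_n$. Your band decomposition $\varepsilon_{n+1}\le|U|<\varepsilon_n$ tacitly uses that hypothesis. Without it, let $n+1$ be the first index with $\varepsilon_{n+1}\le|U|$; then $U$ still meets at most one level-$n$ interval and your estimate is unchanged.

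Two smaller points. Your reduction to $n_0=1$ is fine, but the pieces need not be similar copies; simply start the even mass distribution at level $n_0$. And the "$m_{n+1}\varepsilon_{n+1}\le\varepsilon_n$-type monotonicity" you invoke does not hold in general, since level-$n$ intervals can be much longer than level-$n$ gaps.

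So the $\limsup$ in the statement is a misquotation. It is harmless for the paper only because, in Theorems \ref{thm:exp}, \ref{thm:double-exp2Nn} and \ref{thm:double-expN2n}, the lower bounds used for the ratio converge, so $\liminf$ and $\limsup$ agree. You should stop after your single-scale estimate and state the $\liminf$ conclusion. Your suspicion that the $\limsup$ version needs an extra regularity hypothesis on $(m_n,\varepsilon_n)$ is correct.
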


In a corollary below, we will make use of Koksma's classification of
transcendental numbers, see e.g. Bugeaud's book
\cite{zbMATH05221681}. We briefly define the classes here.

First, for $x \in \mathbb{R}$ and $n \in \mathbb{N}$, let
\[
w^*_n(x) = \sup \left\{w > 0 : \vert x - \alpha \vert < H(\alpha)^{-w-1}
\text{ for infinitely many $\alpha$ with $\deg(\alpha) \le n$}\right\}.
\]
Here, $H(\alpha)$ denotes the naive height of $\alpha$, i.e. the
maximum absolute value of the coefficients of the minimal integer
polynomial of $\alpha$. Furthermore, define
\[
w^*(x) = \limsup_{n\rightarrow \infty} \frac{w_n(x)}{n}.
\]

The classes in Koksma's classification are:
\begin{itemize}
\item $A^*$-numbers, which are the algebraic numbers.
\item $S^*$-numbers, which are the numbers $x$ such that $w^*_n(x) <
  \infty$ and $w^*(x) < \infty$.
\item $T^*$-numbers, which are the numbers $x$ such that $w^*_n(x) <
  \infty$ but $w^*(x) = \infty$.
\item $U^*$-numbers, which are numbers such that $w^*_n(x) = \infty$ for
  $n$ large enough.
\end{itemize}
It is a consequence of the algebraic invariance of the classes and
Lebesgue's density theorem that one of them must contain almost all
numbers, and indeed almost all numbers are $S$-numbers. $U$-numbers
form a set of Hausdorff dimension $0$, which contains the Liouville
numbers. For further details on this classification and the related
classification of Mahler, see \cite{zbMATH05221681}.
	
\section{Theorems and proofs}
	
We will first describe the overall strategy of our proofs. It is
helpful to think of the series as a numeration system, so that the
sequence $A$ is thought of as the `base' and the varying elements from
the $\mathbb{D}_n$ are thought of as the `digits'.
	
For an increasing non-negative sequence $A= (a_n)_{n\in\mathbb{N}}$
and a finite set of `digits'
\[
\mathbb{D}_n = \{1=d_{n,1} < \dots < d_{n,
  m_n}\}\subset\mathbb{N}
\]
for each $n$, consider the set
\[
\mathcal{K}(A, (\mathbb{D}_n)_{n=1}^\infty =
\left\{\sum_{n=1}^{\infty} \frac{1}{a_n d_n} : d_n\in\mathbb{D}_n
\text{ for each } n\in\mathbb{N}\right\}.
\]
\begin{figure}[h] 
  \centering
  \scriptsize
  \begin{tikzpicture} 
    \useasboundingbox (0,-0.45) rectangle (15,0.45);
    \fill [mycolour1] (0,-0.1) rectangle (15,0.1);
    \draw (0,0)--(15,0);
    \node at (1,.45) {$\alpha  + \sum_{j=n}^{\infty} \frac{1}{a_{j, m}}$};
    \node at (14.25,.45) {$\alpha  + \sum_{j=n}^{\infty} \frac1{a_j d_{j, 1}}$};
    \draw (0,-0.15)--+(0,0.3); 
    \draw (15,-0.15)--+(0,0.3);
    \draw (15,-0.15)--+(0,0.3);
  \end{tikzpicture}
  \begin{tikzpicture} 
    \useasboundingbox (0,-0.45) rectangle (15,0.45);
    \fill [mycolour1] (0,-0.1) rectangle (3,0.1);
    \fill [mycolour1] (5,-0.1) rectangle (7,0.1);
    \fill [mycolour1] (11,-0.1) rectangle (15,0.1);
    \node at (2.5,-.45) {$\alpha  + \frac{1}{a_n d_{n, m_n}} +
      \sum_{j=n+1}^{\infty} \frac1{a_j d_{j, 1}},$};  
    \node at (5,.45) {$\alpha + \frac{1}{ a_n d_{n, m_n-1}} +
      \sum_{j=n+1}^{\infty} \frac1{a_j d_{j, m_j}},$}; 
    \node at (8,-.45) {$\alpha + \frac{1}{a_{n, m_n-1}} +
      \sum_{j=n+1}^{\infty} \frac1{a_j d_{j, 1}}$,}; 
    \node at (9,.1) {$\dots$};
    \node at (12,.45) {$\alpha + \frac{1}{a_n d_{n, 1}} +
      \sum_{j=n+1}^{\infty} \frac1{a_j d_{j, m_j}}$}; 
    \draw (0,0)--(15,0);
    \draw (15,-0.15)--+(0,0.3);
    \draw (0,-0.15)--+(0,0.3);
    \draw (3,-0.15)--+(0,0.3);
    \draw (5,-0.15)--+(0,0.3);
    \draw (7,-0.15)--+(0,0.3);
    \draw (11,-0.15)--+(0,0.3);
    \draw (0,-0.15)--+(0,0.3); 
  \end{tikzpicture}
  \caption{Splitting an interval in level $n-1$ into subintervals in
    level $n$ of the Cantor set $\mathcal{K}$ (Here $\alpha$ is
    of the form $\alpha=\frac1{a_1 d_{1,k_1}}+ \dots +
    \frac1{a_{n-1} d_{1,k_{n-1}}}$, $1\leq k_i\leq m_i$ for each
    $1\leq i\leq n-1$), so that the expression in the figure are
    indeed the end point of bigger interval.}
  \label{fig: cantor}
\end{figure}

We wish to apply Theorem \ref{BF}. As such, it is important for us to
get estimates on the involved quantities, $m_n$ and
$\varepsilon_n$. In the proofs, $m_n$ will be (one less than) the
number of digits, and so be part of the assumptions. This leaves us
with the gap sizes, which must be lower bounded by some
$\varepsilon_n$. The main difficulty in the proofs is to ensure that
the conditions stated ensure that gaps do indeed occur, and that their
lengths may be lower bounded by some function.

We now state and prove three theorems, where the sequence $A$ is
allowed different growth rates. In the first theorem, we consider the
case when the growth is geometric, i.e. if $a_n = b^n$ for some real
number $b > 1$. In this case, the result of Han\v{c}l, Schintzel and
\v{S}ustek \cite{zbMATH05635538} implies that the full expressible set
$\mathcal{K}(A)$ is of maximal Hausdorff dimension and in fact
contains an interval. However, restricting the digits to lie in some
small set will remove this property. We obtain the following.
	
\begin{thm}
  \label{thm:exp}
  Assume $a_n = b^n$ for some $b>4$ and $\mathbb{D}_n = \{1,2,\dots,
  K\}$ for every $n$, where $K^2< b$.  Then
  \[
  \dim_{\mathcal{H}}(\mathcal{K}((a_n)_{n=1}^{\infty},
  (\mathbb{D}_n)_{n=1}^\infty)) \geq \frac{\log(K)}{\log(b)}.
  \] 
\end{thm}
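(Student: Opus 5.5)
We realise $\mathcal{K}=\mathcal{K}((b^n)_{n\ge1},(\mathbb{D}_n)_{n\ge1})$ as the generalised Cantor set of Figure \ref{fig: cantor} and apply Theorem \ref{BF}. Fix the first $n-1$ digits and let $\alpha=\sum_{j<n}\frac{1}{b^jd_j}$. The level-$n$ interval determined by a digit $d\in\{1,\dots,K\}$ is
\[
I_n(\alpha,d)=\Bigl[\alpha+\frac{1}{b^n d}+\sum_{j>n}\frac{1}{K b^j},\ \alpha+\frac{1}{b^n d}+\sum_{j>n}\frac{1}{b^j}\Bigr],
\]
whose length is $L_n=(1-\frac1K)\frac{b^{-n}}{b-1}$. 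Every point of $\mathcal{K}$ with these first $n$ digits lies in $I_n(\alpha,d)$. Hence $\mathcal{K}$ contains the intersection over $n$ of the unions of these nested intervals, and the two sets are equal because the partial sums converge. So each level-$(n-1)$ interval splits into $m_n=K$ subintervals, and it remains to bound the gaps from below by some $\varepsilon_n$.

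\emph{Gap estimate (main step).} Two consecutive digits $d$ and $d+1$ give intervals whose left endpoints differ by $\frac{1}{b^n}\bigl(\frac1d-\frac1{d+1}\bigr)=\frac{1}{b^n d(d+1)}\ge \frac{1}{b^nK(K-1)}$. The gap between them is therefore at least
\[
\frac{1}{b^n}\Bigl(\frac{1}{K(K-1)}-\frac{1-1/K}{b-1}\Bigr)=\frac{1}{b^n}\cdot\frac{K-1}{K}\Bigl(\frac{1}{(K-1)^2}-\frac{1}{b-1}\Bigr).
\]
This is positive exactly when $(K-1)^2<b-1$, which follows from $K^2<b$. We therefore set $\varepsilon_n=c\,b^{-n}$ with the constant $c=c(K,b)=\frac{K-1}{K}\bigl(\frac{1}{(K-1)^2}-\frac{1}{b-1}\bigr)>0$. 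We may assume $K\ge2$, since for $K=1$ the bound is $0$ and there is nothing to prove. The hypothesis $b>4$ is then automatic from $b>K^2\ge4$.

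Two points need care. First, the intervals for different $d$ must be disjoint and correctly ordered. The largest digit gives the leftmost interval, as in the figure, and positivity of the gap handles this. Second, the gaps between intervals belonging to different parents are also at least $\varepsilon_n$. This holds because those gaps contain a gap from an earlier level, which has length $\ge cb^{-(n-1)}>cb^{-n}$. The only real obstacle is confirming that the difference of consecutive reciprocals dominates the tail length uniformly in $d$, and this reduces to the displayed inequality.

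Finally, Theorem \ref{BF} gives
\[
\dim_{\mathcal{H}}\mathcal{K}\ge\limsup_{n\to\infty}\frac{\log K^n}{-\log\bigl(K c\, b^{-(n+1)}\bigr)}=\limsup_{n\to\infty}\frac{n\log K}{(n+1)\log b-\log(Kc)}=\frac{\log K}{\log b},
\]
since the constant $\log(Kc)$ and the shift from $n$ to $n+1$ do not affect the limit.
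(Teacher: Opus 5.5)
Your proof is correct and follows the paper's route. Like the paper, you realise $\mathcal{K}$ as the Cantor set of Figure \ref{fig: cantor}, bound the gap between the intervals for consecutive digits below by $c\,b^{-n}$, and apply Theorem \ref{BF} with $m_n=K$ and $\varepsilon_n=c\,b^{-n}$. Your gap constant is slightly sharper than the paper's: you use $d(d+1)\le K(K-1)$ where the paper uses $K(K+1)$, so positivity already holds under $(K-1)^2<b-1$. You also treat the index shift in Theorem \ref{BF} and the gaps between intervals with different parents more carefully than the paper does.
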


Note that the theorem remains valid, but trivial, for $1 < b \le
4$. Indeed, the condition that $K^2 < b$ will in this case imply that
$K=1$, so that the lower bound obtained on the dimension is equal to
zero.

\begin{proof}
  In this case $m_n=K$, if we make sure that all of the expected gaps
  occur. We show this with a positive lower bound for the gaps. Let
  $n\in\mathbb{N}$ and $1\leq k \leq K-1$
  \begin{align*}
    \text{gap in level n}&=\left( \alpha + \frac{1}{b^n k} +
    \sum_{j=n+1}^{\infty} \frac{1}{K b^j}\right) - \left( \alpha +
    \frac{1}{b^n (k+1)} + \sum_{j=n+1}^{\infty} \frac{1}{ b^j}\right)
    \\
    &= \frac{1}{b^n}\left( \frac{1}{k(k+1)}\right) +
    \left(\frac{1}{K}-1\right)\frac{1}{b^n}\frac{1}{b-1} \\
    &\geq
    \frac{1}{b^n}\frac{1}{K}\left(\frac{1}{K+1}+(1-K)\frac{1}{b-1}\right) 
    =: \varepsilon_n > 0 \text{ since } K^2<b,
  \end{align*}
  so we have a valid Cantor set (meaning that there are in fact gaps
  everywhere we expect there to be) and expressions for $m_n$ and
  $\varepsilon_n$.  Now we plug it into Theorem \ref{BF}
  \begin{align*}
    \dim_{\mathcal{H}}(\mathcal{K}((a_n)_{n=1}^{\infty},
    (\mathbb{D}_n)_{n = 1}^\infty) &\geq \limsup_{n\to\infty}
    \frac{\log(K^n)}{-\log\left(K
      \frac{1}{b^n}\frac{1}{K}\left(\frac{1}{K+1}+(1-K)\frac{1}{b-1}
      \right)\right)} \\
    &= \limsup_{n\to\infty} \frac{n\log(K)}{n\log(b) -
      \log\left(\left(\frac{1}{K + 1}+(1 - K)\frac{1}{b -
        1}\right)\right)}
    \\ &= \frac{\log(K)}{\log(b)}.
  \end{align*}			
\end{proof}

In our second result, we consider sequences of doubly exponential
growth.
	
\begin{thm}\label{thm:double-exp2Nn}
  Let $N\in\mathbb{N}_{\geq 2}$ and $0<s\leq r < \frac{N-1}{2}$. Take
  functions $f, g, h: \mathbb{N} \rightarrow \mathbb{N}$ satisfying
  $1\leq f(n), g(n)^{\frac1s}, h(n)^{\frac1r}\leq 2^{(N-1)^n}$ and
  $2^{sN^n} g(n)\leq 2^{rN^n}h(n)$. Define for each $n\in\mathbb{N}$
  \begin{equation*}
    a_n= 2^{N^n} f(n),
  \end{equation*}
  \begin{equation*}
    m_n = \lfloor 2^{sN^n}g(n) \rfloor,
  \end{equation*}
  and  
  \begin{equation*}
    \mathbb{D}_n = \{ 1=d_{n, 1}< d_{n,2}<\dots , d_{n, m_n}\leq
    2^{rN^n}h(n)\} \subset \mathbb{N}. 
  \end{equation*}
  Then
  \begin{equation*}
    \dim_{\mathcal{H}}(\mathcal{K}((a_n)_{n\in\mathbb{N}},
    \left(\mathbb{D}_n\right)_{n=1}^{\infty}))\geq
    \frac{s}{(N-1)(N-s)}.
  \end{equation*}
\end{thm}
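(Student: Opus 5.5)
The plan is to mirror the proof of Theorem \ref{thm:exp}. We realise $\mathcal{K}((a_n)_{n\in\mathbb{N}},(\mathbb{D}_n)_{n=1}^\infty)$ as a generalised Cantor set, show that the expected gaps occur for all $n\ge n_0$, bound them below by an explicit $\varepsilon_n$, and then apply Theorem \ref{BF}.

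\textbf{Intervals and gaps.} The level $n$ intervals are indexed by the choices of $d_1,\dots,d_n$. Each such interval has the form $\alpha+\frac{1}{a_nd}+[\sum_{j>n}\frac{1}{a_jd_{j,m_j}},\sum_{j>n}\frac{1}{a_j}]$, and each level $n-1$ interval splits into $m_n$ of them. For consecutive digits $d<d'$ in $\mathbb{D}_n$, the gap between the corresponding intervals is at least
\[
\frac{1}{a_n}\Bigl(\frac1d-\frac1{d'}\Bigr)-\sum_{j>n}\frac1{a_j}\ \ge\ \frac{1}{a_n d'^2}-\sum_{j>n}\frac{1}{2^{N^j}} .
\]
For the first term, the hypotheses give $a_n\le 2^{N^n+(N-1)^n}$ and $d'\le 2^{rN^n}h(n)\le 2^{r(N^n+(N-1)^n)}$. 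Hence $\frac{1}{a_nd'^2}\ge 2^{-(1+2r)(N^n+(N-1)^n)}$. For the second term, $f\ge1$ gives $a_j\ge 2^{N^j}$, so the tail is at most $2\cdot 2^{-N^{n+1}}$.

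\textbf{Main obstacle: a uniform positive gap.} The key step is to show that the first term dominates the tail for large $n$. This is exactly where $r<\frac{N-1}{2}$ enters: it is equivalent to $1+2r<N$. Since $(N-1)^n/N^n\to0$, we get $(1+2r)(N^n+(N-1)^n)\le N^{n+1}-2$ for all $n\ge n_0$. For such $n$ we may therefore take
\[
\varepsilon_n=2^{-(1+2r)(N^n+(N-1)^n)-1}>0 .
\]
Thus every expected gap exists from level $n_0$ on, which is all Theorem \ref{BF} requires. The finitely many earlier levels only contribute the factor $m_1\cdots m_{n_0}\ge1$ to the numerator. We also need $\mathbb{D}_n$ to have at least $m_n$ elements below $2^{rN^n}h(n)$. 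This is consistent with $2^{sN^n}g(n)\le 2^{rN^n}h(n)$, and it is what lets us use $m_n$ digits per level.

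\textbf{Applying Theorem \ref{BF}.} Since $g\ge1$, we have $m_n\ge 2^{sN^n-1}$. This gives
\[
\log_2(m_1\cdots m_n)\ge s\sum_{k=1}^n N^k-n=s\,\frac{N^{n+1}-N}{N-1}-n .
\]
For the denominator,
\[
-\log_2(m_{n+1}\varepsilon_{n+1})\le (1+2r)\bigl(N^{n+1}+(N-1)^{n+1}\bigr)+2-sN^{n+1}.
\]
Dividing both bounds by $N^{n+1}$ and letting $n\to\infty$, the ratio tends to $\frac{s}{(N-1)(1+2r-s)}$. Because $1+2r<N$, this is at least $\frac{s}{(N-1)(N-s)}$, which proves the claim with room to spare. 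The hypothesis $s\le r$ guarantees $1+2r-s>0$, so the limit is a finite positive number.
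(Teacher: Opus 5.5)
Your proof is correct. It follows the same skeleton as the paper's proof: realise the set as a generalised Cantor set, show the expected gaps appear from level $n_0$ on, and apply Theorem~\ref{BF}. Your gap estimate, however, is genuinely sharper, and it yields a stronger conclusion.

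The paper proves that the digit separation $\frac{1}{a_n}\bigl(\frac{1}{d_{n,j-1}}-\frac{1}{d_{n,j}}\bigr)$ exceeds $(1+\varepsilon)\sum_{k>n}a_k^{-1}$. It then keeps only the leftover $\varepsilon\sum_{k>n}a_k^{-1}$ as its gap bound, which is of order $2^{-N^{n+1}}$. This discards almost all of the separation, since the tail is smaller than the separation by a factor of about $2^{-(N-1-2r)N^n}$.

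You instead show that the tail is at most half the separation, and you keep $\varepsilon_n\approx 2^{-(1+2r)N^n}$. Hence $-\log_2(m_n\varepsilon_n)$ grows like $(1+2r-s)N^n$ rather than $(N-s)N^n$. You therefore get $\frac{s}{(N-1)(1+2r-s)}$, which is strictly larger than the stated $\frac{s}{(N-1)(N-s)}$ precisely because $1+2r<N$.

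The paper's choice only buys a gap bound that does not involve $r$ or the digits. Yours is essentially optimal as a uniform bound. Take $f=g=h=1$, $s<r$, and let $\mathbb{D}_n\setminus\{1\}$ consist of the $m_n-1$ largest integers not exceeding $2^{rN^n}$. Then each level-$(n-1)$ interval is covered by two sets of diameter $O\bigl(2^{-(1+2r-s)N^n}\bigr)$: one for the digit-$1$ child, and one for the cluster of the remaining children. Since there are at most $2^{s(N^n-N)/(N-1)}$ such intervals, this gives the matching upper bound $\frac{s}{(N-1)(1+2r-s)}$ for that choice of digits.
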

\begin{proof}
  First we prove that there exists an $\varepsilon>0$ and an
  $n_0\in\mathbb{N}$ such that for all $n\geq n_0$ we have
  \begin{equation}\label{minbound2}
    \min_{x\neq y\in\mathbb{D}_n} \left\lvert \frac{1}{x a_n}-
    \frac{1}{y a_n} \right\rvert \geq
    (1+\varepsilon)\sum_{k=n+1}^{\infty} \frac{1}{a_k}. 
  \end{equation}
  Consider first the left hand side of (\ref{minbound2}):
  \begin{align*}
    \min_{x\neq y\in\mathbb{D}_n} \left\lvert \frac{1}{x a_n}-
    \frac{1}{y a_n} \right\rvert  &\geq
    \frac{1}{a_n}\left(\frac{1}{d_{n, m_n}-1} -
    \frac{1}{d_{n,m_n}}\right) \\ 
    &\geq \frac{1}{2^{N^{n}}f(n)}\frac{1}{2^{2rN^n}h(n)^2} \\
    &\geq \frac{1}{2^{(1+2r)(N^n+(N-1)^n)}}
  \end{align*}
  As for the right hand side of (\ref{minbound2}), we have
  \begin{align*}
    (1+\varepsilon)\sum_{k=n+1}^{\infty} \frac{1}{a_k} &\leq
    (1+\varepsilon)\frac{2}{2^{N^{n+1}}f(n)}\\
    &\leq (1+\varepsilon) \frac{2}{2^{N^{n+1}}}, 
  \end{align*}
  so we just need to show, that there is an $\varepsilon>0$ and an
  $n_0\in\mathbb{N}$ such that for all $n\geq n_0$ we have
  \begin{equation*}
    \frac{1}{2^{(1+2r)(N^n+(N-1)^n)}}\geq
    (1+\varepsilon)\frac{2}{2^{N^{n+1}}},
  \end{equation*}
  i.e.
  \begin{equation*}
    \varepsilon \leq \frac{2^{(N-2r-1)N^n-(1+2r)(N-1)^n}-2}{2},
  \end{equation*}
  In the above expression, the right hand side goes to $\infty$ as
  $n\to\infty$ (since $N-2r-1>0$), so for any $\varepsilon>0$, there
  is an $n_0(\varepsilon)\in\mathbb{N}$ such that this holds, so the
  claim in true.
		
  Now to look at the gaps in the Cantor set. Let $n\geq n_0$. For
  $j\in\{2,\dots, m_n\}$ the $j$'th gap in layer $n$ is:
  \begin{align*}
    \text{gap}_n^{j} &= \frac{1}{d_{n,j-1}a_n} + \sum_{k=n+1}^{\infty}
    \frac{1}{d_{k, m_k}a_k} - \frac{1}{d_{n,j}a_n} -
    \sum_{k=n+1}^{\infty} \frac{1}{a_k} \\ 
    &\overset{(\ref{minbound2})}{\geq}
    (1+\varepsilon)\sum_{k=n+1}^{\infty} \frac{1}{a_k} -
    \sum_{k=n+1}^{\infty} \frac{1}{a_k} + \sum_{k=n+1}^{\infty}
    \frac{1}{d_{k,m_k} a_k} \\ 
    &\geq \varepsilon\sum_{k=n+1}^{\infty} \frac{1}{a_k} \geq
    \frac{\varepsilon}{2^{N^{n+1}}f(n+1)} =: \varepsilon_n > 0. 
  \end{align*}
  So after level $n_0$ all of the expected gaps do indeed occur, and we have a lower bound for the size of the gaps in layer $n$, namely $\varepsilon_n = \frac{\varepsilon}{2^{N^{n+1}}f(n+1)}$. 
		
  For each $n\geq n_0$ we have, since $\lfloor 2^{sN^k} g(n)\rfloor
  \geq \frac12 2^{sN^k}g(n)\geq \frac12 2^{sN^k}$, 
  \begin{align}
    m_1\cdot \dots \cdot m_{n-1} &= \lfloor 2^{N^1}g(1)\rfloor\cdot
    \dots \cdot \lfloor 2^{N^{n-1}} g(n-1)\rfloor \nonumber\\ 
    &\geq \left(\frac{1}{2}\right)^{n-1} 2^{s(N^1+\dots + N^{n-1})}
    \nonumber\\ 
    &= 2^{s\frac{N^n-N}{N-1}-(n-1)} \label{m1...mn2}.
  \end{align}
  And for each $n\geq n_0$ we have
  \begin{align}
    \varepsilon_n m_n &= \frac{\varepsilon}{2^{N^{n+1}}f(n+1)}
    \lfloor 2^{sN^n}g(n)\rfloor \nonumber \\
    & \geq \frac{\varepsilon}{2}\cdot\frac{1}{2^{(N-s)N^n}}
    \frac{g(n)}{f(n+1)} \nonumber \\
    &\geq \frac{\varepsilon}{2} \cdot
    \frac{1}{2^{(N-s)N^n+(N-1)^{n+1}}}\label{epsilonnmn2},  
  \end{align}
  so
  \begin{equation*}
    -\log(\varepsilon_n m_n) \leq -\log(\frac{\varepsilon}{2}) +
    \log(2)\left[(N-s)N^n + (N-1)^{n+1}\right]. 
  \end{equation*}
  To find a lower bound for the Hausdorff dimension of
  $\mathcal{K}((a_n)_{n\in\mathbb{N}},
  \left(\mathbb{D}_n\right)_{n=1}^{\infty})$ we apply Theorem \ref{BF}
  as well as (\ref{m1...mn2}) and (\ref{epsilonnmn2}):
  \begin{align*}
    \dim_{\mathcal{H}}\left( \mathcal{K}((a_n)_{n\in\mathbb{N}},
    \left(\mathbb{D}_n\right)_{n=1}^{\infty}) \right) &\geq
    \limsup_{n\to\infty} \frac{\log(m_1\cdot \dots \cdot
      m_{n-1})}{-\log(\varepsilon_n m_n)} \\
    &\geq \limsup_{n\to\infty} \frac{\log(2)\left(s\frac{N^n - N}{N -
        1} - (n - 1)\right)}{-\log(\frac{\varepsilon}{2}) +
      \log(2)\left[(N-s)N^n+(N-1)^{n+1}\right]} \\
    &= \frac{s}{(N-1)(N-s)}
  \end{align*}
\end{proof}

Our third theorem again concerns doubly exponential growth, but this
time slower than that of Theorem \ref{thm:double-exp2Nn}.

\begin{thm}\label{thm:double-expN2n}
  Let $N\in\mathbb{N}_{\geq 2}$ and $0<s\leq r<\frac12$. Let $0<\eta <
  1$. Take functions $f,g, h: \mathbb{N} \rightarrow \mathbb{N}$
  satisfying $1\leq f(n), g(n)^{\frac1s}, h(n)^{\frac1r}\leq
  N^{(2-\eta)^n}$ and $N^{s2^n} g(n)\leq N^{r2^n}h(n)$.  Define for
  each $n\in\mathbb{N}$
  \begin{equation*}
    a_n= N^{2^n} f(n),
  \end{equation*}
  \begin{equation*}
    m_n = \lfloor N^{s2^n}g(n) \rfloor,
  \end{equation*}
  and  
  \begin{equation*}
    \mathbb{D}_n = \{ 1=d_{n, 1}< d_{n,2}<\dots , d_{n, m_n}\leq
    N^{r2^n}h(n)\} \subset \mathbb{N}. 
  \end{equation*}
  Then
  \begin{equation*}
    \dim_{\mathcal{H}}\left(\mathcal{K}((a_n)_{n=1}^{\infty},
    \left(\mathbb{D}_n\right)_{n=1}^{\infty})\right) \geq
    \frac{s}{2-s}.
  \end{equation*}
\end{thm}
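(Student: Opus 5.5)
Following the proof of Theorem \ref{thm:double-exp2Nn}, we build the generalised Cantor set of Figure \ref{fig: cantor}, show that the expected gaps occur from some level on, bound the gap sizes from below, and apply Theorem \ref{BF}.

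The first step is to find $\varepsilon>0$ and $n_0$ such that for all $n\ge n_0$
\[
\min_{x\neq y\in\mathbb{D}_n}\left\lvert \frac{1}{xa_n}-\frac{1}{ya_n}\right\rvert \ge (1+\varepsilon)\sum_{k=n+1}^\infty \frac{1}{a_k}.
\]
For the left side, the closest pair is $d_{n,m_n}-1,d_{n,m_n}$. This gives the lower bound
\[
\frac{1}{a_n\, d_{n,m_n}^2}\ge \frac{1}{N^{2^n}f(n)\,N^{2r2^n}h(n)^2}\ge N^{-(1+2r)(2^n+(2-\eta)^n)},
\]
using $f(n)\le N^{(2-\eta)^n}$ and $h(n)\le N^{r(2-\eta)^n}$. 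For the right side, since $a_{k+1}\ge a_k^2/N^{(2-\eta)^k}$ grows doubly exponentially, the tail is at most $2/a_{n+1}\le 2N^{-2^{n+1}}$ for large $n$.

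Comparing exponents, we need $2^{n+1}-(1+2r)2^n-(1+2r)(2-\eta)^n\to\infty$. This is $(1-2r)2^n$ minus a term of lower order $(2-\eta)^n$, and it does tend to infinity because $r<\tfrac12$. This is exactly where both hypotheses $r<\frac12$ and $\eta>0$ are used. I expect this step to be the main point needing care: the perturbations $f,g,h$ are allowed to be as large as $N^{(2-\eta)^n}$, and one must check they are negligible against $2^n$. In Theorem \ref{thm:double-exp2Nn} the analogous comparison was between $N^n$ and $(N-1)^n$.

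The second step repeats the gap computation of the previous proof verbatim. The $j$-th gap in level $n\ge n_0$ is at least $\varepsilon\sum_{k>n}1/a_k\ge \varepsilon/a_{n+1}=\varepsilon N^{-2^{n+1}}/f(n+1)=:\varepsilon_n$. For the counts we use $m_k\ge \tfrac12 N^{s2^k}$, which gives
\[
\log(m_1\cdots m_{n-1})\ge \log N\cdot s(2^n-2)-(n-1)\log 2 .
\]
We also have
\[
\varepsilon_n m_n\ge \tfrac{\varepsilon}{2}N^{-(2-s)2^n-(2-\eta)^{n+1}},
\]
since $g\ge1$ and $f(n+1)\le N^{(2-\eta)^{n+1}}$.

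The final step plugs these into Theorem \ref{BF}, indexed as in the previous proof. This yields
\[
\limsup_{n\to\infty}\frac{s2^n\log N+O(n)}{(2-s)2^n\log N+(2-\eta)^{n+1}\log N+O(1)}=\frac{s}{2-s},
\]
because $(2-\eta)^{n+1}=o(2^n)$. Here $n_0$ plays the role of the level after which the Cantor structure is regular. The finitely many initial levels do not affect the limsup. If needed, one can discard them by restricting attention to a subset in which the digits $d_1,\dots,d_{n_0}$ are fixed, which is a translate of a set with the same structure.
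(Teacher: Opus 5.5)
Your proposal is correct and follows the paper's proof essentially step for step: the separation condition comes from $r<\tfrac12$ and $(2-\eta)^n=o(2^n)$, the gaps are bounded below by $\varepsilon\sum_{k>n}1/a_k$, and then Theorem~\ref{BF} is applied; you are in fact slightly more careful, since you keep the factor $f(n+1)$ in $\varepsilon_n$ (which the paper's write-up drops without comment) and absorb the resulting $(2-\eta)^{n+1}$ term in the limit. The one slip is that $a_{k+1}\ge a_k^2/N^{(2-\eta)^k}$ is false in general (take $f(k)$ near $N^{(2-\eta)^k}$ and $f(k+1)=1$; the exponent should be $2(2-\eta)^k$), but this is harmless because the bound you actually use, $\sum_{k>n}1/a_k\le 2N^{-2^{n+1}}$, follows at once from $f\ge 1$.
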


\begin{proof}
  Again we have for some $\varepsilon>0$ and some $n_0\in\mathbb{N}$,
  that for any $n\geq n_0$:
  \begin{equation}
    \min_{x\neq y\in\mathbb{D}_n} \left\lvert\frac{1}{x a_n} -
    \frac{1}{y a_n} \right\rvert \geq
    (1+\varepsilon)\sum_{k=n+1}^{\infty} \frac{1}{a_k}. 
  \end{equation}
  Indeed
  \begin{equation*}
    \min_{x\neq y\in\mathbb{D}_n} \left\lvert\frac{1}{x a_n} -
    \frac{1}{y a_n} \right\rvert \geq \frac{1}{d_{n,m_n}^2 a_n} \geq
    \frac{1}{N^{(1+2r)2^n}N^{(1+2r)(2-\eta)^{n}}} 
  \end{equation*}
  and 
  \begin{align*}
    (1+\varepsilon)\sum_{k=n+1}^{\infty}\frac{1}{a_k} \leq
    (1+\varepsilon)\frac{2}{N^{2^{n+1}}} 
  \end{align*}
  and since $r<\frac12$, for any $\varepsilon>0$ there is an
  $n_0(\varepsilon)\in\mathbb{N}$ such that for all $n\geq
  n_0(\varepsilon)$ 
  \begin{align*}
    \varepsilon > \frac{N^{(1-2r)2^n-(1+2r)(2-\eta)^n}-2}{2}.
  \end{align*}
  Exactly as in the proof of Theorem \ref{thm:double-exp2Nn}, this
  implies that the gaps in layer $n$ of the Cantor set are $\geq
  \frac{\varepsilon}{N^{2^{n+1}}}$.  Furthermore we have
  \begin{align}
    m_1\cdot\dots\cdot m_{n-1} &= \lfloor
    N^{2^1}g(1)\rfloor\cdot\dots\cdot\lfloor N^{2^{n-1}}g(n-1)\rfloor
    \nonumber \\ 
    &\geq \left(\frac{1}{2}\right)^{n-1}N^{s(2^n-2)} \label{m1...mn3}
  \end{align}
  and
  \begin{align}
    \varepsilon_n m_n &= \frac{\varepsilon}{N^{2^{n+1}}} \lfloor
    N^{s2^n} g(n) \rfloor \nonumber \\
    &\geq \frac{\varepsilon}{2}\frac{1}{
      N^{(2-s)2^n}}. \label{epsilonnmn3} 
  \end{align}
  We find our bound for the Hausdorff dimension of
  $\mathcal{K}((a_n)_{n=1}^{\infty},
  \left(\mathbb{D}_n\right)_{n=1}^{\infty})$ by applying Theorem
  \ref{BF} as well as (\ref{m1...mn3}) and (\ref{epsilonnmn3}):
  \begin{equation*}
    \dim_{\mathcal{H}}\left(\mathcal{K}((a_n)_{n=1}^{\infty},
    \left(\mathbb{D}_n\right)_{n=1}^{\infty}) \right) \geq
    \limsup_{n\to\infty} \frac{\log(N)s2^n-\log(N)s2-\log(2)(n -
      1)}{-\log\left(\frac{\varepsilon}{2}\right) + (2-s)2^n\log(N)} =
    \frac{s}{2-s}.
  \end{equation*}
\end{proof}

In the case $N=2$ Theorem \ref{thm:double-exp2Nn} and Theorem
\ref{thm:double-expN2n} agree. It is worth noting that the bound
obtained in Theorem \ref{thm:double-expN2n} is independent of $N$,
whereas the bound in Theorem \ref{thm:double-exp2Nn} tends to zero as
$N$ increases. This is natural, as the result of Erd\H{o}s
\cite{zbMATH03578925} mentioned in the introduction says that if the
$a_n$ are integers, then the limiting set would contain only Lioville
numbers, which is a set of Hausdorff dimension $0$.

We note however that in the above results, the sequence $A =
(a_n)_{n=1}^\infty$ is not assumed to be a sequence of integers. Any
sequence of real numbers satisfying the required growth conditions
will suffice.
	
Assuming that the sequence consists of rational numbers or of
algebraic numbers brings us into the realms of the works of Han\v{c}l
\cite{zbMATH01154148}, Han\v{c}l and Nair \cite{zbMATH06790024},
Andersen and Kristensen \cite{zbMATH07126822} and Laursen
\cite{zbMATH07725151}. The following result is mostly relevant for
these settings. For sequences of integers, it shows in particular that
Erd\H{o}s' criterion on when an expressible set contains only
Liouville numbers is sharp.
	
\begin{cor}
  \label{cor:classification}
  Suppose that the $a_n$ are all integers with $\limsup_{n \rightarrow
    \infty} a_n^{1/t^n} < \infty$ for some $t\in\mathbb{N}$. Then
  $\mathcal{K}(A)$ contains numbers which are not $U^*$-numbers.
\end{cor}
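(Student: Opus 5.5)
Since the $U^*$-numbers form a set of Hausdorff dimension $0$, it is enough to show that $\mathcal{K}(A)$ contains a subset of positive Hausdorff dimension. The natural subset is a restricted expressible set $\mathcal{K}(A,(\mathbb{D}_n)_{n=1}^\infty)\subseteq\mathcal{K}(A)$ for a well-chosen digit sequence, and I would bound its dimension from below by Theorem \ref{thm:double-exp2Nn}. By hypothesis there are $t\in\mathbb{N}$ and $C>1$ with $a_n\le C^{t^n}$ for all large $n$. Since the $a_n$ are integers, we also have $a_n\ge1$.

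\emph{Step 1: reduce to a sparse subsequence.} Theorem \ref{thm:double-exp2Nn} requires $a_n=2^{N^n}f(n)$ with $1\le f(n)\le 2^{(N-1)^n}$, so $a_n$ must be at least $2^{N^n}$ and not much larger. The given $a_n$ need not be that large. To fix this I would pass to a subsequence. Taking digits $d_n\in\mathbb{N}$ with $d_n$ huge for indices outside the subsequence is not directly allowed, because every term contributes. Instead I would absorb the extra indices into the construction: fix $d_n=d_n^*$ at those indices, chosen as a very fast-growing sequence, so that their total contribution is a fixed constant shift plus a tail that is negligible at every scale. Alternatively one can note that the proof of Theorem \ref{thm:double-exp2Nn} only uses the gap estimate (\ref{minbound2}) and the sizes of $m_n$ and $\varepsilon_n$. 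So I would rerun that argument directly rather than cite it verbatim.

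\emph{Step 2: concrete choice.} I would pick $N$ larger than $t$, say $N=t+2$, and for each $n$ a single "active" index $i_n$. The active digits are $\mathbb{D}_{i_n}=\{1,\dots,m_n\}$ times a fixed scaling $c_n$, chosen so that $a_{i_n}c_n\approx 2^{N^n}$; this is possible because $a_{i}\le C^{t^i}$ allows us to multiply up to the required size. Every other index gets one fixed digit that is large enough to make its term smaller than $2^{-N^{n+1}}$ times a tiny constant. The digit sets then have the form required in Theorem \ref{thm:double-exp2Nn} with $f(n)$ bounded, up to a fixed translation, and translation does not change dimension. The digits lie in $\mathbb{N}$, so the resulting set lies in $\mathcal{K}(A)$.

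\emph{Step 3: conclude.} The gap estimate of Theorem \ref{thm:double-exp2Nn} carries over, because the inactive terms are uniformly dominated by $\varepsilon\sum_{k>n}1/a_k$. This gives dimension at least $s/((N-1)(N-s))>0$, so the set cannot lie inside the $U^*$-numbers. I expect the main obstacle to be the bookkeeping in Step 2. One must check that the effective denominators $a_{i_n}d$ stay integers, satisfy the upper growth bound, and keep the gaps uncontaminated by the inactive indices; controlling the inactive tail uniformly is the delicate point.
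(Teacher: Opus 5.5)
Your proposal is correct and follows the paper's route: a digit-restricted subset of $\mathcal{K}(A)$, Theorem \ref{thm:double-exp2Nn}, and the fact that the $U^*$-numbers have Hausdorff dimension zero. It also makes explicit something the paper's proof leaves implicit: the hypothesis bounds $a_n$ only from above, while Theorem \ref{thm:double-exp2Nn} needs $a_n\ge 2^{N^n}$, so a factor has to be absorbed into the digits. The subsequence and inactive indices are unnecessary. Take $i_n=n$; in any case frozen single digits only translate the set, so there is no gap contamination to control. The simplest bookkeeping is $c_n=2^{N^n}$ and $f(n)=a_n$, with $N$ depending on $\sup_n a_n^{1/t^n}$ as well as on $t$ so that $1\le a_n\le 2^{(N-1)^n}$ for all $n$; this fits Theorem \ref{thm:double-exp2Nn} verbatim, including integer-valued $f$.
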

	
\begin{proof}
  The growth condition on $a_n$ implies that $a_n \le 2^{N^n}$ for
  some $N \in \mathbb{N}_{\ge 2}$, which is the same as that in
  Theorem \ref{thm:double-exp2Nn}. Since $\mathbb{D}_n \subseteq
  \mathbb{N}$ for any choice of digit sets, $\mathcal{K}(A,
  (\mathbb{D}_n)_{n=1}^\infty) \subseteq \mathcal{K}(A)$, so choosing
  digits so that the Hausdorff dimension of the former is positive
  immediately implies the positivity of the Hausdorff dimension of
  $\mathcal{K}(A)$. Since the Hausdorff dimension of the set of
  $U^*$-numbers is equal to $0$, there must be other types of numbers
  in $\mathcal{K}(A)$.
\end{proof}

\section{Concluding remarks and open problems}

We end the paper with some remarks on the sharpness and limitations of
our results.

For sequences of exponential growth, Theorem \ref{thm:exp} appears to
be reasonably sharp. Indeed, the lower bound would tend to $1$ for $K$
tending to $b$, which at least for integral $b$ seems a natural
restriction on the digits. However, the proof requires us to assume
that $K^2 < n$ in order to ensure sufficiently large gaps in the
construction, so values of $K$ approaching $b$ are not
allowed. Removing this restriction is an open problem.

Concerning Theorems \ref{thm:double-exp2Nn} and
\ref{thm:double-expN2n}, we suspect that the bounds obtained here are
not best possible. We are not aware of work calculating upper bounds
on sets of the exact same form as those considered here, but for
related sets there are upper bounds by Han\v{c}l and \v{S}ustek
\cite{zbMATH05844058}. However, their sets assume a uniform bound on
the `digits' of any individual member of the set, which is not the
case here. Getting an upper bound on the Hausdorff dimension for the
sets of this paper remains an open problem.

\subsubsection*{Funding}

MG's and SK's research is supported by Aarhus University Research
Foundation, grant no. AUFF-E-2021-9-2. JH is supported by the
University of Ostrava.

\subsubsection*{Data Availibility}

Data sharing is not applicable to this article as no new data were created or analyzed in this study.

\subsubsection*{Declaration}

The authors declare that they have no conflict of interest.


\begin{thebibliography}{10}

\bibitem{zbMATH03435566}
{\sc A.~V. Aho and N.~J.~A. Sloane}, {\em Some doubly exponential sequences},
  Fibonacci Q., 11 (1973), pp.~429--437.

\bibitem{zbMATH07126822}
{\sc S.~B. Andersen and S.~Kristensen}, {\em Arithmetic properties of series of
  reciprocals of algebraic integers}, Monatsh. Math., 190 (2019), pp.~641--656.

\bibitem{zbMATH05221681}
{\sc Y.~Bugeaud}, {\em Approximation by algebraic numbers}, vol.~160 of Camb.
  Tracts Math., Cambridge: Cambridge University Press, 2007.

\bibitem{zbMATH03578925}
{\sc P.~Erd{\H{o}}s}, {\em Some problems and results on the irrationality of
  the sum of infinite series}.
\newblock J. {Math}. {Sci}. 10, 1-7 (1975)., 1975.

\bibitem{zbMATH01994007}
{\sc K.~Falconer}, {\em Fractal geometry. {Mathematical} foundations and
  applications}, Chichester: Wiley, 2nd ed.~ed., 2003.

\bibitem{zbMATH01154148}
{\sc J.~Han{\v{c}}l}, {\em Transcendental sequences}, Math. Slovaca, 46 (1996),
  pp.~177--179.

\bibitem{zbMATH02103621}
\leavevmode\vrule height 2pt depth -1.6pt width 23pt, {\em A criterion for
  linear independence of series}, Rocky Mt. J. Math., 34 (2004), pp.~173--186.

\bibitem{zbMATH06790024}
{\sc J.~Han{\v{c}}l and R.~Nair}, {\em On the irrationality of infinite series
  of reciprocals of square roots}, Rocky Mt. J. Math., 47 (2017),
  pp.~1525--1538.

\bibitem{zbMATH06090451}
{\sc J.~Han{\v{c}}l, R.~Nair, L.~Novotn{\'y}, and J.~{\v{S}}ustek}, {\em On the
  {Hausdorff} dimension of the expressible set of certain sequences}, Acta
  Arith., 155 (2012), pp.~85--90.

\bibitem{zbMATH05200846}
{\sc J.~Han{\v{c}}l, R.~Nair, and J.~{\v{S}}ustek}, {\em On the {Lebesgue}
  measure of the expressible set of certain sequences}, Indag. Math., New Ser.,
  17 (2006), pp.~567--581.

\bibitem{zbMATH05635538}
{\sc J.~Han{\v{c}}l, A.~Schinzel, and J.~{\v{S}}ustek}, {\em On expressible
  sets of geometric sequences}, Funct. Approximatio, Comment. Math., 39 (2008),
  pp.~71--95.

\bibitem{zbMATH05227390}
{\sc J.~Han{\v{c}}l and J.~{\v{S}}ustek}, {\em Expressible sets of sequences
  with {Hausdorff} dimension zero}, Monatsh. Math., 152 (2007), pp.~315--319.

\bibitem{zbMATH05844058}
\leavevmode\vrule height 2pt depth -1.6pt width 23pt, {\em Boundedly
  expressible sets.}, Czech. Math. J., 59 (2009), pp.~649--654.

\bibitem{zbMATH06714945}
\leavevmode\vrule height 2pt depth -1.6pt width 23pt, {\em Sequences of
  {Cantor} type and their expressibility}, Math. Slovaca, 67 (2017),
  pp.~41--50.

\bibitem{zbMATH07725151}
{\sc M.~L{\o}kkegaard~Laursen}, {\em Algebraic degree of series of reciprocal
  algebraic integers}, Rocky Mt. J. Math., 53 (2023), pp.~517--529.

\end{thebibliography}
\end{document}